
\documentclass[letterpaper, 10 pt, conference]{ieeeconf}  

\IEEEoverridecommandlockouts                              

\overrideIEEEmargins                                      



\usepackage{cite}

\usepackage{amsmath,amssymb,amsfonts}
\usepackage{graphicx}
\usepackage{textcomp}
\usepackage{mathtools}
\usepackage{xcolor}
 
\usepackage{amsthm}
\usepackage{caption}
\usepackage{fancyhdr}
\usepackage{algorithm}
\usepackage[]{algpseudocode}
\def\BibTeX{{\rm B\kern-.05em{\sc i\kern-.025em b}\kern-.08em
    T\kern-.1667em\lower.7ex\hbox{E}\kern-.125emX}}
    
\usepackage{epstopdf}

\newtheorem{theorem}{Theorem}

\newtheorem{lemma}{Lemma}
\newtheorem{definition}{Definition}
\newtheorem{remark}{Remark}
\newtheorem{assum}{Assumption}

\title{\LARGE \bf
A stochastic output-feedback MPC scheme for distributed systems
}

\author{Christoph Mark and Steven Liu
\thanks{Institute  of  Control  Systems,  Department  of  Electrical  and  Computer
Engineering, University of Kaiserslautern, Erwin-Schrödinger-Str. 12, 67663
Kaiserslautern, Germany, {\tt\small [mark|sliu]@eit.uni-kl.de}}%
}

\fancyhf{}

\fancyfoot[c]{}
\fancypagestyle{FirstPage}{
	\lfoot{		\begin{minipage}{\textwidth}
			\footnotesize
		This version has been accepted for publication in Proc. American Control Conference (ACC), 2020. Personal use of this material is permitted. Permission from AACC must be obtained for all other uses, in any current or future media, including reprinting/republishing this material for advertising or promotional purposes, creating new collective works, for resale or redistribution to servers or lists, or reuse of any copyrighted component of this work in other works.
	\end{minipage}} 
}

\begin{document}

\maketitle
\thispagestyle{empty}
\pagestyle{empty}

\begin{abstract}
In this paper, we present a novel stochastic output-feedback MPC scheme for distributed systems with additive process and measurement noise. The chance constraints are treated with the concept of probabilistic reachable sets, which, under an unimodality assumption on the disturbance distributions are guaranteed to be satisfied in closed-loop. By conditioning the initial state of the optimization problem on feasibility, the fundamental property of recursive feasibility is ensured. Closed-loop chance constraint satisfaction, recursive feasibility and convergence to an asymptotic average cost bound are proven. The paper closes with a numerical example of three interconnected subsystems, highlighting the chance constraint satisfaction and average cost compared to a centralized setting.
\end{abstract}


\section{Introduction}
\thispagestyle{FirstPage}
Model Predictive Control (MPC) is in its standard form a full state-feedback control strategy \cite{mpc}. However, this limits its applicability in many practical situations where the state vector is commonly not fully measurable and only an estimate of the true state is available, which leads to the output-feedback MPC framework \cite{Findeisen} \cite{Mayne}.

If uncertainties are present, the literature of MPC is separated into robust \cite{bemporad1} \cite{mayne2} and stochastic approaches \cite{mesbah}. The difference between them is in general that in stochastic MPC (SMPC) the underlying distribution of the disturbance is taken into account, while in robust MPC a bounded worst-case disturbance is considered. Therefore, in SMPC the hard constraints are relaxed to hold probabilistically as chance constraints. SMPC is distinguished in two kind of approaches. The fist one is called \textit{randomized approach} \cite{Prandini} \cite{Schildbach}, where at every time step a sufficient number of disturbance realizations is sampled in order to find a optimal input sequence to the system. These methods can deal with arbitrary disturbance realizations but their heavy computational load is a tough bottleneck for fast online implementations. The second method is based on analytical approximations of the stochastic control problem, namely \textit{probabilistic approximation method} \cite{Cannon1}  \cite{Farina1} \cite{Mark2}.
 
While the vast majority of the stochastic MPC approaches is developed for centralized setups, only a few methods are concerned about an efficient distributed implementation. Furthermore, to the best of the author's knowledge, the stochastic output-feedback case for distributed systems has never been investigated in view of closed-loop chance constraint satisfaction, nor with an iterative controller structure. These issues were recently highlighted as an open research direction \cite{mesbah}. The necessity for distributed MPC strategies is emerging due to the increasing complexity of the underlying control systems \cite{Christofides} \cite{Maestre}.
\subsection*{Related work}
In \cite{Cannon1} the concept of stochastic tubes was introduced, which was later on extended to the output-feedback case \cite{Cannon3}. In \cite{Kouvaritakis} the stochastic tube concept was further extended to probabilistic tubes, whereas in \cite{Lorenzen} a general constraint tightening framework was presented, both leading to a less conservative feasible region. These approaches rely on a boundedness assumption of the underlying disturbance distribution and were developed for central MPC setups.

In \cite{Cannon2} \cite{Farina1} \cite{Paulson} \cite{b10} the boundedness assumption was relaxed to infinite support. Hence, recursive feasibility cannot be achieved by constraint tightening. These approaches typically rely on a backup solution, which is applied whenever the problem becomes infeasible. In \cite{Hewing2} the authors proposed a strictly recursive feasible SMPC based on indirect feedback.

In \cite{Farina2} an output-feedback stochastic MPC scheme is presented, which extends the state-feedback formulation from \cite{Farina1}. In both approaches the chance constraints are reformulated via Cantelli's inequality. The main drawback of this approach is the lack of closed-loop guarantees.

\subsection*{Contribution}
In this paper, we develop a stochastic output-feedback MPC scheme for distributed systems. The underlying MPC optimization problem is reduced to a quadratic program, which we opt to solve via distributed optimization. The chance constraints are treated with the concept of probabilistic reachable sets (PRS) \cite{b10}, which we recently proposed to use in a distributed setting \cite{Mark1}. We extend the distributed PRS concept to the output-feedback case, such that the synthesis of distributed PRS can be done fully parallelizable via distributed optimization. The MPC algorithm is proven to be recursively feasible with guaranteed closed-loop chance constraint satisfaction and asymptotic convergence to an average cost bound. Since we solve the MPC problem via distributed optimization, we do not rely on an initially known central state and input sequence to initialize the controllers. Hence, the controller synthesis and the closed-loop operation do not need a central coordination node.  
\subsection*{Outline}
The first section introduces the notations and the problem setup. The second section is dedicated to the controller structure, where afterwards the estimation and prediction errors are reformulated for a joint computation of the covariance prediction based on linear matrix inequalities (LMI) \cite{b8}. The section continues with the chance constraint tightening, the introduction of the cost functions and the global MPC optimization problem. The section ends with the main result on recursive feasibility, closed-loop chance constraint satisfaction and convergence. The paper closes with an example of the proposed approach and some concluding remarks.

\section{Preliminaries}
\label{sec:preliminaries}
\subsection{Notations}
Given two polytopic sets $\mathbb{A}$ and $\mathbb{B}$, the Pontryagin difference is given as $\mathbb{A} \ominus \mathbb{B} = \{ a \in \mathbb{A} : a+b \in \mathbb{A}, \forall b \in \mathbb{B} \}$. The set of positive real numbers is defined as $\mathbb{R}_{> 0}$, whereas positive definite and semidefinite matrices are indicated as $A>0$ and $A\geq0$, respectively. Given a matrix $A$ and vector $x$, we denote the $(i,j)$-th element of $A$ as $[A]_{i,j}$ and the $j$-th element of $x$ as $[x]_j$. The spectral radius of a matrix $A$ is denoted as $\rho(A)$. The weighted 2-norm is $\Vert x \Vert_P = \sqrt{x^\top P x}$.
For an event $E$ we define the probability of occurrence as $\text{Pr}(E)$, whereas the expected value of a random variable $w$ is given by $\mathbb{E}(w)$. The set $ \{ 1, ..., M \} \subseteq \mathbb{N} $ is denoted as $\mathcal{M}$.
\subsection{Problem description}
We consider a network of $M$ linear time-invariant systems, where each system $i \in \mathcal{M}$ has a state vector $x_i \in \mathbb{R}^{n_i}$, input vector $u_i \in \mathbb{R}^{m_i} $ and output vector $y_i \in \mathbb{R}^{p_i}$. The distribution functions of the zero-mean i.i.d. process noise $w_i \in \mathbb{R}^{n_i}$ and zero-mean i.i.d. measurement noise $d_i \in \mathbb{R}^{p_i}$ are assumed to be central convex unimodal (CCU), i.e. $w_i \sim \mathcal{Q}_i^W(0, \Sigma_i^W)$ and $d_i \sim \mathcal{Q}_i^D(0,\Sigma_i^D)$, where additionally the second moments $\Sigma_i^W$ and $\Sigma_i^D$ are known. The local dynamics are governed by
\begin{equation}
\begin{aligned}
	x_i(k+1) &= \sum_{j=1}^M A_{ij} x_j(k) + B_i u_i(k) + w_i(k)\label{eq:local_dynamics_raw} \\ 
	y_i(k) &= \sum_{j=1}^M C_{ij} x_j(k) + d_i(k),
\end{aligned}
\end{equation}
where $A_{ij} \in \mathbb{R}^{n_i \times n_i}$, $C_{ij} \in \mathbb{R}^{p_i \times n_j}$ and $B_{i} \in \mathbb{R}^{m_i \times n_i}$.
The local states and inputs are constrained in convex polytopes, which contain the origin in their interior
\begin{align*}
\mathbb{X}_i = \{x_i | H^x_i x_i \leq h^x_i\}, \:\mathbb{U}_i = \{u _i| H^u_i u_i \leq h^u_i\} \: \forall i \in \mathcal{M}, 
\end{align*}
where afterwards the stochasticity of the problem is utilized to formulate point-wise in-time chance constraints 
\begin{subequations}
\begin{align}
&\text{Pr}(x_i(k) \in \mathbb{X}_i) \geq p_{i,x} \: \forall k\geq 0\\
&\text{Pr}(u_i(k) \in \mathbb{U}_i) \geq p_{i,u} \: \forall k\geq 0.
\end{align} \label{eq:chance_constraints}
\end{subequations}
The constants $p_{i,x} \in (0,1)$ and $p_{i,u} \in (0,1)$ are the probability levels of constraint satisfaction for states and inputs for each subsystem $i\in \mathcal{M}$. Similar to \cite{Conte1} we express the coupling dynamics with the notion of neighboring systems.
\begin{definition}[\hspace{1sp}\cite{Conte1} Neighboring systems]
System $j$ is a neighbor of system $i$ if $A_{ij} \neq 0$ or $C_{ij} \neq 0$. The set of all neighbors of system $i$, including system $i$ itself, is denoted as $\mathcal{N}_i$. The states of all systems $j \in \mathcal{N}_i$ are denoted as $x_{\mathcal{N}_i} \in \text{col}_{j \in \mathcal{N}_i}(x_j) \in \mathbb{R}^{n_{\mathcal{N}_i}}$. 
\end{definition}
The local dynamics \eqref{eq:local_dynamics_raw} can be written compactly as
\begin{subequations}
\begin{align}
		x_i(k+1) &= A_{\mathcal{N}_i} x_{\mathcal{N}_i}(k) + B_i u_i(k) + w_i(k) \label{eq:local_dynamics_neighbor} \\
		y_i(k) &= C_{\mathcal{N}_i} x_{\mathcal{N}_i}(k) + d_i(k), \label{eq:local_output_neighbor}
\end{align}
\label{eq:local_system}
\end{subequations}
whereas the global dynamics are given by
\begin{align}
	\begin{aligned}
		x(k+1) &= A x(k) + B u(k) + w(k) \\
		y(k) &= C x(k) + d(k),	
	\end{aligned}
	 \label{eq:global_system}
\end{align}
with $x = \text{col}_{i \in \mathcal{M}}(x_i)$, $u = \text{col}_{i \in \mathcal{M}}(u_i)$, $w = \text{col}_{i \in \mathcal{M}}(w_i)$ and $d = \text{col}_{i \in \mathcal{M}}(d_i)$. From \eqref{eq:local_dynamics_raw} we have that $A\in \mathbb{R}^{n \times n}$ and $C \in \mathbb{R}^{p \times n}$ are block-sparse and $B \in \mathbb{R}^{n \times m}$ is block diagonal.
\begin{assum}(Structured controller and injection gain)~
	\begin{itemize}
	\item The pair $(A,B)$ is stabilizable with a structured linear feedback control law of the form
\begin{align*}
\kappa(x) = Kx = \text{col}_{i \in \mathcal{M}}(K_{\mathcal{N}_i} x_{\mathcal{N}_i}),
\end{align*}	
where $K_{\mathcal{N}_i} \in \mathbb{R}^{m_i \times n_{\mathcal{N}_i}}$, such that $\rho(A + BK) < 1$.
 \item The pair $(A,C$) is observable with a structured linear injection gain of the form
\begin{align*}
\lambda(y) = L y = \text{col}_{i \in \mathcal{M}}(L_i y_i)
\end{align*}	
where $L_{i} \in \mathbb{R}^{n_i \times p_i}$, such that $\rho(A-LC) < 1$
	\end{itemize}
\label{assum_stabilizable}
\end{assum}
\begin{remark}
\label{remark_stabilizable}
The structured controllers $K_{\mathcal{N}_i}$ can be computed via structured LMIs, e.g. \cite[Lemma 10]{Conte1}. By setting $(A_{\mathcal{N}_i}, B_i) = (A_{\mathcal{N}_i}^\top, C_{\mathcal{N}_i}^\top)$, the structured injection gains $L_{i}$ can similarly be derived.
\end{remark}

\section{Distributed Output feedback SMPC}
In this paper, we aim to design an iterative distributed MPC algorithm based on output-feedback for system \eqref{eq:local_dynamics_raw}. Given \eqref{eq:local_system}, for each subsystem $i \in \mathcal{M}$ we define a distributed Luenberger observer, which provides an estimate $\hat{x}_i$ of the real state $x_i$ based on the output $y_i$
\begin{align*}
	\hat{x}_i(k+1) &= A_{\mathcal{N}_i} \hat{x}_{\mathcal{N}_i}(k) + B_i u_i(k) + L_i(y_i(k) - \hat{y}_i(k)),
\end{align*} 
where $\hat{y}_i(k) = C_{\mathcal{N}_i} \hat{x}_{\mathcal{N}_i}(k)$. Now we define the robust tube-based control law
\begin{align}
	u_i(k) = v_i(0|k) + K_{\mathcal{N}_i} (\hat{x}_{\mathcal{N}_i}(k) - z_{\mathcal{N}_i}(0|k)), \label{eq:input_disturbed}
\end{align}
with $z$ being the state of the nominal system
\begin{align*}
	z_i(t+1|k) = A_{\mathcal{N}_i} z_{\mathcal{N}_i}(t|k) + B_i v_i(t|k).
\end{align*}
The notations $z_{\mathcal{N}_i}(t|k)$ and $v_i(t|k)$ denote $t$-step ahead predictions of states and inputs, obtained as the result of an underlying MPC optimization problem solved at time step $k \geq 0$. The choice of the initial value $z_i(0|k), \forall i \in \mathcal{M}$ will be discussed later on.
Let further $\tilde{x}$ be the state estimation error and $e$ the observer error, i.e.
\begin{subequations}
\begin{align}	
\tilde{x}(k) &= x(k) - \hat{x}(k) \\
 e(k) &= \hat{x}(k) - z(0|k),
\end{align}
\label{eq:errors}
\end{subequations}
such that the real state is given by
\begin{align}
	x(k) = z(0|k) + e(k) + \tilde{x}(k). \label{eq:disturbed_state}
\end{align}
\subsection{Error dynamics}
In order to satisfy the chance constraints \eqref{eq:chance_constraints}, we have to characterize error bounds on the states and controls. In view of \eqref{eq:input_disturbed} and \eqref{eq:disturbed_state} this can be achieved in terms of $e$ and $\tilde{x}$. The corresponding predictive error dynamics of \eqref{eq:errors} are given by
\begin{align*}
	&\scalebox{0.94}{$\tilde{x}_i(t+1|k) = A_{\mathcal{N}_i,L} \tilde{x}_{\mathcal{N}_i}(t|k) + w_i(t|k) - L_{i} d_{i}(t|k)$},  \\
	&\scalebox{0.94}{$e_i(t+1|k) = A_{\mathcal{N}_i,K} e_{\mathcal{N}_i}(t|k) + L_{i} ( C_{\mathcal{N}_i} \tilde{x}_{\mathcal{N}_i}(t|k) + d_{i}(t|k) )$}, 
\end{align*}
where $A_{\mathcal{N}_i,L} = A_{\mathcal{N}_i} - L_{i} C_{\mathcal{N}_i}$ and $A_{\mathcal{N}_i,K} = A_{\mathcal{N}_i} + B_i K_{\mathcal{N}_i}$,The predictive error dynamics are coupled to the true dynamics with the following initial conditions:
\begin{align*}
	\tilde{x}(0|k) &= \tilde{x}(k), \\
	e(0|k) &= e(k).
\end{align*}
However, the predictive error dynamics can similarly be expressed with the augmented error dynamics
\begin{align}
\xi_i^+ = \Psi_{\mathcal{N}_i} \xi_{\mathcal{N}_i} + \Gamma_{i} \omega_i, \label{eq:extended_error}
\end{align}
where $\xi_i = [\tilde{x}_i^\top \: \: e_i^\top]^\top$, $\xi_{\mathcal{N}_i} = [\tilde{x}_{\mathcal{N}_i}^\top \: \: e_{\mathcal{N}_i}^\top]^\top$, $\omega_{i} = [w_i^\top \: \: d_i^\top]^\top$,
\begin{align*}
\Psi_{\mathcal{N}_i} = \begin{bmatrix}
			A_{\mathcal{N}_i,L} & 0 \\
			L_{i} C_{\mathcal{N}_i} & A_{\mathcal{N}_i,K}
\end{bmatrix}, \Gamma_{i} = \begin{bmatrix}
I & -L_{i}\\
0 & L_{i}
\end{bmatrix}.
\end{align*}
We loosened the notation by denoting the successor state with a $^+$, e.g. $\xi = \xi(t|k)$ and $\xi^+ = \xi(t+1|k)$. 
\subsection{Error propagation}
In order to probabilistically bound \eqref{eq:extended_error}, we make use of PRS, which are characterized through the mean $\mu = \mathbb{E}(\xi)$ and variance $\Sigma = \text{var}(\xi)$. Note that by a proper initialization of $x(0) = \hat{x}(0) = z(0)$ we achieve that $\mathbb{E}(\xi(0)) = 0$, which, together with the zero-mean process $\omega_{i}$ implies that $\mathbb{E}(\xi(t|k)) = 0, \forall t,k\geq 0$. Furthermore, the nominal state reduces to $z = \mathbb{E}(x)$.
\begin{remark}
\label{rem:over_approx}
The global covariance matrix $\Sigma = \mathbb{E}(\xi^+ \xi^{+,\top})$ is by definition a dense matrix, which is a tough bottle neck for a distributed implementation. To this end we introduce $\hat{\Sigma}$ as block diagonal upper bound of $\Sigma$, i.e. $\Sigma \leq \hat{\Sigma}$.
\end{remark}
Using the zero-mean property of $\xi$, the covariance propagation is given by
\begin{align}
	\Sigma_i^+ = \mathbb{E}(\xi_{\mathcal{N}_i}^+ \xi_{\mathcal{N}_i}^{+,\top}) = \Psi_{\mathcal{N}_i} \Sigma_{\mathcal{N}_i} \Psi_{\mathcal{N}_i}^\top + \Gamma_{i} \Omega_{i} \Gamma_{i}^\top, \label{eq:covariance_prediction}
\end{align}
where $\Omega_{i} = \text{diag}(\Sigma_i^w, \Sigma_i^d)$. Due to the block diagonality of $\hat{\Sigma}$, we can obtain the block diagonal neighborhood covariance matrices $\Sigma_{\mathcal{N}_i}$ via selector matrices, e.g. as in \cite[Sec. 4]{Conte1}. Moreover, $\Sigma_{\mathcal{N}_i}$ can be partitioned into sub matrices
\begin{align*}
\Sigma_{\mathcal{N}_i} = \left[
\begin{array}{c|c}
	\Sigma_{\mathcal{N}_i}^{\tilde{x}}
 & 0 \\
\hline
0 & 
	\Sigma_{\mathcal{N}_i}^{e}
\end{array}
\right],
\end{align*}
where the first block upper bounds to the covariance of $\tilde{x}_{\mathcal{N}_i}$ and the second block the covariance of $e_{\mathcal{N}_i}$. The local covariance matrices are equally defined as 
\begin{align*}
\Sigma_{i} = \left[
\begin{array}{c|c}
	\Sigma_i^{\tilde{x}}
 & 0 \\
\hline
0 & 	\Sigma_i^{e}
\end{array}
\right].
\end{align*}
\begin{remark}
\label{rem:ccu_property}
Note that CCU distributions are closed under linear transformation and convolution \cite{Dharmadhikari}. Hence, eq. \eqref{eq:covariance_prediction} preserves the CCU property of the propagated error distributions $\tilde{x}_i^+ \sim \mathcal{Q}_i(0, \Sigma_i^{\tilde{x}, +})$ and $e_i^+ \sim \mathcal{Q}_i(0, \Sigma_i^{e,+})$.
\end{remark}
By relaxing \eqref{eq:covariance_prediction} as an inequality, the propagation of the covariances can be characterized via structured LMIs, such that the optimization problem can be solved fully distributed. 
\begin{lemma}
\label{lem:schur}
The inequality version of \eqref{eq:covariance_prediction} is equivalent to the following structured LMI
\begin{align}
 \begin{bmatrix}
 	\Sigma_i^+ - \Gamma_{i} \Omega_{i} \Gamma_{i}^\top & \Psi_{\mathcal{N}_i} \Sigma_{\mathcal{N}_i} \\
 	\Sigma_{\mathcal{N}_i} \Psi_{\mathcal{N}_i}^\top & \Sigma_{\mathcal{N}_i}
 \end{bmatrix} \geq 0. \label{eq:covariance_prediction_LMI}
\end{align}
\end{lemma}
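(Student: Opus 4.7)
The plan is to recognize \eqref{eq:covariance_prediction_LMI} as a textbook Schur complement of the inequality version of \eqref{eq:covariance_prediction} and invoke the Schur complement lemma. Identify the blocks
\[
A = \Sigma_i^+ - \Gamma_{i}\Omega_{i}\Gamma_{i}^\top,\qquad B = \Psi_{\mathcal{N}_i}\Sigma_{\mathcal{N}_i},\qquad C = \Sigma_{\mathcal{N}_i},
\]
so that the left-hand side of \eqref{eq:covariance_prediction_LMI} is exactly $\begin{bmatrix} A & B \\ B^\top & C \end{bmatrix}$. Since $\Sigma_{\mathcal{N}_i}$ is (an upper bound of) a covariance matrix, it is symmetric positive semidefinite, which is the structural hypothesis needed for the Schur complement equivalence.

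Assuming first that $\Sigma_{\mathcal{N}_i}>0$, the Schur complement lemma states $\begin{bmatrix} A & B \\ B^\top & C \end{bmatrix}\geq 0$ iff $C>0$ and $A - B C^{-1} B^\top \geq 0$. Substituting the blocks, the second condition becomes
\[
\Sigma_i^+ - \Gamma_{i}\Omega_{i}\Gamma_{i}^\top - \Psi_{\mathcal{N}_i}\Sigma_{\mathcal{N}_i}\,\Sigma_{\mathcal{N}_i}^{-1}\,\Sigma_{\mathcal{N}_i}\Psi_{\mathcal{N}_i}^\top \geq 0,
\]
which simplifies directly to $\Sigma_i^+ \geq \Psi_{\mathcal{N}_i}\Sigma_{\mathcal{N}_i}\Psi_{\mathcal{N}_i}^\top + \Gamma_{i}\Omega_{i}\Gamma_{i}^\top$, i.e.\ the inequality version of \eqref{eq:covariance_prediction}.

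The only subtlety I anticipate is handling the degenerate case $\Sigma_{\mathcal{N}_i}\not> 0$. Two equivalent routes are available: either invoke the generalized Schur complement with the Moore--Penrose pseudoinverse (which additionally requires the range condition $(I-\Sigma_{\mathcal{N}_i}\Sigma_{\mathcal{N}_i}^\dagger)\Sigma_{\mathcal{N}_i}\Psi_{\mathcal{N}_i}^\top = 0$, and this holds trivially because $\Sigma_{\mathcal{N}_i}\Sigma_{\mathcal{N}_i}^\dagger$ acts as the identity on the range of $\Sigma_{\mathcal{N}_i}$), or perturb $C$ to $C+\epsilon I$, apply the standard lemma, and pass to the limit $\epsilon\downarrow 0$ using continuity of positive semidefiniteness. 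Either way, the equivalence is preserved. With this minor technical step, the proof reduces to a one-line application of the Schur complement, so the lemma follows immediately.
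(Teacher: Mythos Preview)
Your proof is correct and follows exactly the paper's approach: assume $\Sigma_{\mathcal{N}_i}>0$, write the inequality version of \eqref{eq:covariance_prediction} as $\Sigma_i^+ - \Gamma_{i}\Omega_{i}\Gamma_{i}^\top - \Psi_{\mathcal{N}_i}\Sigma_{\mathcal{N}_i}\Sigma_{\mathcal{N}_i}^{-1}\Sigma_{\mathcal{N}_i}\Psi_{\mathcal{N}_i}^\top \geq 0$, and apply the Schur complement. Your additional treatment of the singular case via the generalized Schur complement (or an $\epsilon$-perturbation) goes beyond the paper, which simply assumes positive definiteness.
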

\begin{proof}
For positive definite $\Sigma_{\mathcal{N}_i}$ we can reformulate the inequality version of \eqref{eq:covariance_prediction} as 
\begin{align*}
	\Sigma_i^+ -  \Gamma_{i} \Omega_i \Gamma_{_i}^\top - \Psi_{\mathcal{N}_i} \Sigma_{\mathcal{N}_i} ( \Sigma_{\mathcal{N}_i})^{-1} \Sigma_{\mathcal{N}_i} \Psi_{\mathcal{N}_i}^\top \geq 0.
\end{align*}
Application of the Schur complement yields \eqref{eq:covariance_prediction_LMI}.
\end{proof}
In this formulation, we can obtain the stationary distribution of $\xi(t|k)$ for $t \rightarrow \infty$  by modifying \eqref{eq:covariance_prediction_LMI}, i.e.
\begin{align}
 \begin{bmatrix}
 	\Sigma_{f,i} - \Gamma_i \Omega_i \Gamma_{i}^\top & \Psi_{\mathcal{N}_i} \Sigma_{f, \mathcal{N}_{i}} \\
 	\Sigma_{f, \mathcal{N}_{i}} \Psi_{\mathcal{N}_i}^\top & \Sigma_{f, \mathcal{N}_{i}}
 \end{bmatrix} \geq 0, \label{eq:covariance_prediction_LMI_terminal}
\end{align}
and solving the following convex optimization problem
\begin{subequations}
\begin{eqnarray}
	\Sigma_{f} = &&\underset{}{\text{min}} \quad \sum_{i=1}^M \Vert \Sigma_{f,i} \Vert_F^2 \\
		&&\text{s.t.} \quad \eqref{eq:covariance_prediction_LMI_terminal}, \Sigma_{f,i}>0 \: \forall i \in \mathcal{M},
\end{eqnarray}
\label{eq:optimization_variance}
\end{subequations}
where the cost metric minimizes the Frobenius norm of the local covariance matrix. The matrix $\Sigma_f$ is the global block diagonal covariance matrix. Note that due to the distributed structure we can solve \eqref{eq:optimization_variance} with common distributed optimization techniques, e.g. the alternating direction method of multipliers (ADMM) \cite{boyd_admm}. 
\subsection{Probabilistic Reachable Sets}
Now we recall \eqref{eq:disturbed_state} and point out that we want to satisfy the chance constraints \eqref{eq:chance_constraints} for the true state $x$. Hence, we define $\delta x_i = e_i + \tilde{x}_i$, which can be expressed via \eqref{eq:extended_error} as $\delta x_i = [I \quad I] \: \: \xi_i$ with covariance
\begin{align}
	\Sigma_{f,i}^X = \begin{bmatrix}
		I & I 
	\end{bmatrix} \Sigma_{f,i} \begin{bmatrix}
		I & I 
	\end{bmatrix}^\top. \label{eq:state_variance}
\end{align}
Letting $\delta u =  K_{\mathcal{N}_i} (\hat{x}_{\mathcal{N}_i} - z_{\mathcal{N}_i}) = [0 \quad K_{\mathcal{N}_i}] \;\xi_{\mathcal{N}_i}$, then $\mathbb{E}(\delta u) = 0$ and the covariance matrix is given by
\begin{align}
	\Sigma_{f,i}^U = \begin{bmatrix}
		0 & K_{\mathcal{N}_i} 
	\end{bmatrix} \Sigma_{f, \mathcal{N}_{i}}\begin{bmatrix}
		0 & K_{\mathcal{N}_i} 
	\end{bmatrix}^\top. \label{eq:input_variance}
\end{align}
From the block diagonality of $\Sigma_{f,i}$ follows that equation \eqref{eq:state_variance} describes the convolution of the two CCU probability density functions of $e$ and $\tilde{x}$, which, according to Remark \ref{rem:ccu_property}, remains CCU. 
\begin{definition}[\hspace{1sp}\cite{b10} Probabilistic Reachable Set]
A set $\mathcal{R}$ is said to be a PRS of probability level $p$ for system \eqref{eq:extended_error} if
\begin{align*}
	\xi(0) = 0 \Rightarrow \text{Pr}(\xi(n) \in \mathcal{R}) \geq p \quad \forall n \geq 0.
\end{align*}
\end{definition}
In the following, we use Chebeyshev's inequality to construct PRS from the mean and variance information of the errors $\delta x, \delta u$. Since $\mathbb{E}(\delta x) = 0$, we get
\begin{align}
\mathcal{R}_i^X = \{ \delta x \in \mathbb{R}^{n_i} | \delta x^\top (\Sigma^X_{f,i})^{-1} \delta x \leq \tilde{p}_{i,x} \}, \label{eq:state_PRS}
\end{align}
where $p_{i,x} = 1 - n_i / \tilde{p}_{i,x}$ denotes the probability level. Similarly we can define the input PRS from the input covariance \eqref{eq:input_variance} as
\begin{align}
\mathcal{R}_i^U = \{ \delta u \in \mathbb{R}^{m_i}| \delta u^\top (\Sigma^U_{f,i})^{-1} \delta u \leq \tilde{p}_{i,u} \}. \label{eq:input_PRS}
\end{align}
\begin{remark}
The bound $\tilde{p}$ holds for arbitrary CCU distributions. However, if one knows the inverse cumulative density functions of $\mathcal{Q}^{w}_i$ and $ \mathcal{Q}^{d}_i$, the probability bound can be significantly tighter. In case of normal distributions, $\tilde{p} = \mathcal{X}_{n}^2(p)$ yields the tightest probabilistic bound for a sum of squared normals, where $\mathcal{X}_{n}^2(p)$ is the inverse cumulative density function of the Chi-squared distribution of $n$ degrees of freedom, evaluated at probability level $p$.
\end{remark}
\subsection{Constraint tightening}
In \eqref{eq:state_PRS} - \eqref{eq:input_PRS} we introduced ellipsoidal PRS. Hence, the constraint tightening cannot be done in standard form via Pontryagin set differences. However, by exploiting the marginalization property of CCU distributions, we are able to reformulate the ellipsoidal PRS into marginal PRS by using the marginal distribution in direction of each dimension of $\delta x$ and $\delta u$, i.e. the symmetric marginal PRS is given by
\begin{align*}
\mathcal{R}_{i,j}^X = \{ [\delta x]_j \in \mathbb{R} \: | \:  | [\delta x]_j | \leq \sqrt{\tilde{p}_{i,x} \cdot [\Sigma^X_{f,i}]_{j,j}}  \},
\end{align*}
where $\quad j = \{1, \ldots, n_i \}$.
 Similarly, we obtain the marginal PRS for the input distribution $\mathcal{R}_{i,k}^U, k = \{1, \ldots, m_i\}$. 
 Box shaped PRS are then simply given by the Cartesian products $\bar{\mathcal{R}}^X_i =\mathcal{R}_{i,1}^X \times \ldots \times \mathcal{R}_{i,n_i}^X$ and $\bar{\mathcal{R}}^U_i =\mathcal{R}_{i,1}^U \times \ldots \times \mathcal{R}_{i,m_i}^U$, which reduce the constraint tightening to the Pontryagin differences
\begin{align*}
	\mathbb{Z}_i = \mathbb{X}_i \ominus \bar{\mathcal{R}}_i^X, \quad \mathbb{V}_i = \mathbb{U}_i \ominus \bar{\mathcal{R}}_i^U.
\end{align*}
\begin{remark}
A less conservative constraint tightening may be achieved by considering the stationary distribution of $\Sigma_f^{\tilde{x}}$ and using a growing-tube inspired constraint tightening for $\Sigma^e(t|k)$, by computing a $t$-step PRS for the states and controls via \eqref{eq:covariance_prediction_LMI} and optimization problem \eqref{eq:optimization_variance}.
\end{remark}
The global constraint sets can now simply be obtained as the Cartesian products of the local sets, i.e.
\begin{align*}
\mathbb{Z} = \prod_{i \in \mathcal{M}} \mathbb{Z}_i, \quad \mathbb{V} = \prod_{i \in \mathbb{M}} \mathbb{V}_i.
\end{align*}
\subsection{Cost functions and distributed invariance}
In this work we consider a stabilizing MPC framework with terminal cost and terminal constraints. To this end, we make the following assumption:
\begin{assum}
\label{assum:distributed_ingredients}
There exists a terminal cost $V_f(z) = \sum_{i=1}^M V_{f,i}(z_i) = \sum_{i = 1}^M \Vert z_i \Vert_{P_i}^2 = \Vert z \Vert_P^2$ with block diagonal $P$, a distributed terminal controller $v = Kz = \text{col}_{i \in \mathcal{M}}(K_{\mathcal{N}_i} z_{\mathcal{N}_i})$ and a structured terminal set $\mathbb{Z}_f \subseteq \mathbb{Z}$, such that the following conditions hold for each $z \in \mathbb{Z}_f$
\begin{subequations}
\begin{align}
& V_f((A+BK) z) \leq V_f(z) - l(z,Kz), \label{eq:terminal_cost_dec} \\
& z \in \mathbb{Z}, \quad Kz \in \mathbb{V} \label{eq:terminal_constraints} \\
& (A+BK)z \in \mathbb{Z}_f. \label{eq:terminal_invariance}
\end{align}
\end{subequations}
The stage cost $l(z,v) = \sum_{i \in \mathcal{M}} (l_i(z_i, v_i))$ is the sum of local stage cost functions
\begin{align*}
l_i(z_i, v_i) = \Vert z_i \Vert_{Q_i}^2 +  \Vert v_i \Vert_{R_i}^2,
\end{align*}
where $Q_i\geq 0$, $R_i > 0$.
\end{assum}
\begin{remark}
\label{remark_terminal_invariance}
The design of a separable terminal cost function and distributed terminal controllers can be achieved via structured LMIs \cite{Conte1}. A structured terminal set $\mathbb{Z}_f$ is then defined as the largest feasible $\alpha$-level set of $V_f(z)$, i.e.
\begin{align*}
	\mathbb{Z}_f = \{ z \in \mathbb{R}^n | z^\top P z \leq \alpha \}, \quad \alpha \in \mathbb{R}_{>0},
\end{align*}
which can be solved efficiently as a distributed linear program, e.g. \cite[Sec 4.2]{Conte1} for details.
\end{remark}
For the MPC optimization problem we chose a finite horizon cost function   
\begin{align*}
	J(t|k) =  \sum_{i=1}^M \bigg \{ V_{f,i}(z_i(N|k)) + \sum_{t=0}^{N-1} l_i(z_i(t|k), v_i(t|k)) \bigg \},
\end{align*}
where $N$ denotes the prediction horizon.
\subsection{MPC optimization problem}
\label{subsection:MPC}
The following MPC optimization problem is solved via distributed optimization at every time instant $k \geq 0$ 
\begin{alignat}{2}
&\!\min_{\mathcal{V}, \mathcal{Z}}  &      &   \quad \scalebox{0.99}{$\displaystyle \sum_{i=1}^M \bigg \{ V_{f,i}(z_i(N|k)) + \sum_{t=0}^{N-1} l_i(z_i(t|k), v_i(t|k)) \bigg \} $} \label{mpc_problem} \\
&\text{s.t.} &      & \quad z(t+1|k) = A z(t|k) + B v(t|k), \hspace{1em} t=0,...,N-1 \nonumber\\
&                  &      & \quad (z(t|k),v(t|k)) \in {\mathbb{Z} \times \mathbb{V}}, \hspace{4.25em} t=1,...,N-1\nonumber\\
&                  &      & \quad z(N|k) \in {\mathbb{Z}_{f}}, \nonumber\\
&                  &      & \quad {z(0|k) \in  \{\hat{x}(k), z^*(1|k-1) \}}, \nonumber
\end{alignat}
where $\mathcal{V} = \{ v(0|k), \ldots, v(N-1|k) \}$ and $\mathcal{Z} = \{ z(0|k), \ldots, z(N|k) \}$ denote the input and state sequences. Each subsystem $i \in \mathcal{M}$ takes the first elements of the state and input sequences and implements them under the control law \eqref{eq:input_disturbed} to the real system \eqref{eq:global_system}. Then the remainder of the sequences are discarded, the new states are estimated and Problem \eqref{mpc_problem} is solved repeatedly with a shifted time window at time $k = k+1$.
\subsection*{Initial condition}
Before stating the main result of the paper, we briefly discuss the initial condition of Problem \ref{mpc_problem}. In stochastic MPC approaches with unbounded disturbances, recursive feasibility cannot be achieved by constraint tightening, e.g. as it is done in robust tube-based MPC. A straight forward approach to ensure this property is to initialize the optimization problem with the shifted optimal solution (Mode $2$), i.e. $z(0|k) = z^*(1|k-1)$, which leads to a poor closed-loop performance, since no feedback is applied. 
The second method (Mode $1$) is to initialize $z(0|k) = \hat{x}(k)$ with the disturbance affected state estimate. However, this can lead to infeasibility due to the unboundedness of the additive disturbance. 
To this end, we condition the initial state of Problem \ref{mpc_problem} on its feasibility in Mode 1 or Mode 2. Whenever Problem \ref{mpc_problem} is feasible in Mode $1$, then solve it, otherwise solve it in Mode $2$, which is guaranteed to be feasible. The following assumption is necessary to state the Lipschitz-based convergence result, which was similarly used in \cite{b10}.
\begin{assum}
The set $\Xi$ of feasible $z(0|k)$ in \eqref{mpc_problem} is bounded. \label{assum:bounded_set}
\end{assum}
\subsection*{Distributed ADMM}
In this paper, we use distributed consensus ADMM to solve Problem \ref{mpc_problem}. In \cite{Conte3}, the authors provided a corresponding formulation for distributed MPC, which we adopted in this paper. The algorithm asymptotically converges to the optimum of the original optimization problem \cite{boyd_admm}. Due to the linear convergence rate of ADMM, the algorithm achieves a medium accuracy within a few iterations, but for high accuracy an increasing number of iterations is necessary. In practice, this boils down to a trade-off between accuracy and computation time. For the sake of simplicity we make the following assumption.
\begin{assum}
\label{assum:exact_feasibility}
Problem \ref{mpc_problem} is solved exactly by distributed optimization.
\end{assum}
\begin{remark}
The assumption on exact minimization can be removed if one considers inexact minimization of Problem \ref{mpc_problem}, e.g. \cite{koehler2}, where an additional constraint tightening ensures feasibility of the uncertain predicted state trajectory.
\end{remark}
\begin{theorem}
\label{thm:main_result}
Let Assumptions \ref{assum_stabilizable}-\ref{assum:exact_feasibility} hold. If the MPC optimization Problem \ref{mpc_problem} admits a feasible solution at time $k=0$, then it is recursively feasible and the chance constraints \eqref{eq:chance_constraints} are satisfied in closed-loop for any $k \geq 0$ with convex symmetric PRS \eqref{eq:state_PRS} - \eqref{eq:input_PRS}. Furthermore, conditioned on $\delta x(0) = \delta u(0) = 0$, the controller achieves the following asymptotic average cost
\begin{align*}
	\lim_{r\rightarrow \infty} \frac{1}{r} \sum_{k = 0}^r \mathbb{E}(x^\top(k) Q x(k) + u^\top(k) R u(k)) \leq c,
\end{align*}
where $c = \gamma \sqrt{ \text{tr}(\Gamma^\top P_\Sigma \Gamma \Omega) } \geq 0$, $\gamma = \frac{\sqrt{2} \beta}{\sqrt{\lambda_{\text{min}}(P_\Sigma)}}>0$, $\beta$ denotes a Lipschitz constant and $P_\Sigma > 0$ the solution of the Lyapunov inequality $\Psi^\top P_{\Sigma} \Psi \leq P_{\Sigma} - \epsilon I$ for some $\epsilon > 0$.
\end{theorem}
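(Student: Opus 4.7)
The plan is to prove the three claims of Theorem~\ref{thm:main_result} in order: recursive feasibility, closed-loop chance constraint satisfaction, and the asymptotic average cost bound. The main analytical burden will fall on the last item, where a stochastic Lyapunov-type argument must account for both the disturbance-driven error variance and the effect of the two-mode initialization.

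\textbf{Recursive feasibility.} I would exploit the dichotomous initialization. Given a feasible optimizer $(z^*(\cdot|k), v^*(\cdot|k))$ at time $k$, a Mode~2 candidate at $k+1$ is obtained by shifting the previous optimal sequence and appending the terminal pair $((A+BK)z^*(N|k), K z^*(N|k))$. By \eqref{eq:terminal_constraints}--\eqref{eq:terminal_invariance} of Assumption~\ref{assum:distributed_ingredients} the shifted stages lie in $\mathbb{Z}\times\mathbb{V}$ and the appended terminal stage lies in $\mathbb{Z}_f$. Hence Mode~2 is always feasible, and since the initialization rule selects Mode~1 only when it is itself feasible and otherwise falls back to Mode~2, Problem~\ref{mpc_problem} is recursively feasible for all $k\geq 0$.

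\textbf{Chance constraint satisfaction.} Using the decomposition $x(k) = z(0|k) + \delta x(k)$ and $u(k) = v(0|k) + \delta u(k)$ with $\delta x$ and $\delta u$ linear images of $\xi$, together with the tightened sets $\mathbb{Z}=\mathbb{X}\ominus\bar{\mathcal{R}}^X$ and $\mathbb{V}=\mathbb{U}\ominus\bar{\mathcal{R}}^U$, it suffices to show that $\delta x(k)$ and $\delta u(k)$ lie in the respective box PRS with the prescribed marginal probabilities. By Remark~\ref{rem:ccu_property} these errors remain zero-mean CCU under the propagation \eqref{eq:covariance_prediction}, so the Chebyshev-type bounds embedded in \eqref{eq:state_PRS}--\eqref{eq:input_PRS} apply componentwise to their marginals and transfer to the Cartesian box PRS. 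Since \eqref{eq:optimization_variance} returns a stationary upper bound $\Sigma_f$ that dominates $\Sigma(t|k)$ (starting from $\xi(0)=0$) for every $t,k$, the PRS are valid in closed loop and \eqref{eq:chance_constraints} follows.

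\textbf{Asymptotic cost bound.} Let $J^*(k)$ denote the optimal value of Problem~\ref{mpc_problem}. Applying the terminal decrease \eqref{eq:terminal_cost_dec} to the Mode~2 shifted candidate yields a pointwise bound $J^{\text{M2}}(k+1) - J^*(k) \leq -l(z(0|k),v(0|k))$. When Mode~1 is selected at $k+1$ the re-initialization at $\hat x(k+1)$ perturbs the value function, and I would control this perturbation through the Lipschitz constant $\beta$ of $J^*$ on the bounded set $\Xi$ from Assumption~\ref{assum:bounded_set}, giving a bound proportional to $\Vert \hat x(k+1) - z^*(1|k)\Vert$. Using $\Psi^\top P_\Sigma \Psi \leq P_\Sigma - \epsilon I$, the covariance recursion \eqref{eq:covariance_prediction}, and Jensen's inequality, the expected norm $\mathbb{E}\Vert\xi\Vert$ is bounded in steady state by a multiple of $\sqrt{\text{tr}(\Gamma^\top P_\Sigma \Gamma \Omega)}$; combining this with the Lipschitz estimate and the constants $\lambda_{\text{min}}(P_\Sigma)$ and $\beta$ yields the form of $c$ and $\gamma$ stated in the theorem. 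This produces $\mathbb{E}[J^*(k+1)] - J^*(k) \leq -\mathbb{E}[l(z(0|k),v(0|k))] + c$. Telescoping from $k=0$ to $r$, dividing by $r$, and letting $r\to\infty$ delivers the average cost bound, after converting the nominal stage cost to the true stochastic cost via the constant expected covariance offset (which is absorbed into $c$). The hard part is managing the Mode~1/Mode~2 switch cleanly inside the Lyapunov step so that the Lipschitz perturbation is captured entirely by the single constant $c$, along the lines of the approach in \cite{b10}.
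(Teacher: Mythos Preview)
Your recursive-feasibility argument and your cost-decrease argument (Mode~2 shift plus Lipschitz perturbation for Mode~1, bounded through the Lyapunov matrix $P_\Sigma$) match the paper's proof essentially line for line.

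The gap is in the chance-constraint step. You argue that since $\Sigma_f$ dominates the \emph{predicted} covariance $\Sigma(t|k)$ starting from $\xi(0)=0$, the PRS are automatically valid for the \emph{closed-loop} error. But the closed-loop error $\delta x(k)$ is not a realization of the open-loop prediction $\xi(k|0)$: whenever Mode~1 is selected, the observer error $e$ is reset to zero (because $z(0|k+1)=\hat x(k+1)$), so $\delta x(k)$ is a mode-dependent process whose law is not captured by the open-loop recursion you invoke. A covariance bound on the predicted error therefore does not, by itself, deliver a probability bound on the closed-loop error.

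The paper closes this gap by conditioning $\text{Pr}(\delta x(k+1)\in\mathcal R^X)$ on the mode selected at $k+1$. In Mode~2 the closed-loop error equals the one-step prediction $\tilde x(1|k)+e(1|k)$, so nothing extra is needed. In Mode~1 one has $e(k+1)=0$, and the key inequality
\[
\text{Pr}(\tilde x(1|k)\in\mathcal R^X)\;\ge\;\text{Pr}(\tilde x(1|k)+e(1|k)\in\mathcal R^X)
\]
is established via Anderson's inequality for CCU distributions over convex symmetric sets (together with \cite[Thm.~3]{b10}). This is precisely what lets the closed-loop containment probability be lower-bounded by the open-loop predictive one, after which your PRS/covariance argument applies. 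Your proposal omits this conditioning and the Anderson step; without them the closed-loop claim does not follow.
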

The proof can be found in the appendix.
\section{Numerical example}
This section is dedicated to a brief numerical example. We consider $M=3$ subsystems with neighbors $\mathcal{N}_i = \{1,2,3\}, \forall i \in \mathcal{M}$, dynamic matrices  $A_{ii} = \left[ \begin{smallmatrix}
 	1 & 1\\
 	0 & 1
\end{smallmatrix} \right],
A_{ij} = \left[ \begin{smallmatrix}
	0.1 & 0\\
	0.1 & 0.1
\end{smallmatrix} \right], \forall j \in \mathcal{N}_i \backslash \{i\}, \forall i \in \mathcal{M}$, input matrices $
 B_{i} = \left[ \begin{smallmatrix}
  	0\\
 	1
\end{smallmatrix} \right], \forall i \in \mathcal{M}$ and output matrices $ C_{ii} = \left[ \begin{smallmatrix}
  	1 & 0.5
\end{smallmatrix} \right], \forall i \in \mathcal{M}$. Each subsystem is subject to a normally distributed process noise with $\Sigma_i^W = 0.005 I$ and a normally distributed measurement noise with $\Sigma_i^V = 0.001$. Each subsystem has to satisfy the chance constraint on the second state Pr$(-1 \leq [x_i]_2  \leq 0.5) \geq 0.6$. The weighting matrices are set to $Q_i = \left[ \begin{smallmatrix}
 	1 & 0\\
 	0 & 0.1
\end{smallmatrix} \right]$, $R_i = 0.1$ and the prediction horizon is $N = 15$. For simplicity, the terminal set is set to $\mathbb{Z}_{f,i} = \{0 \}$.
\subsection*{Simulation results}
 We carried out $K = 2000$ Monte-Carlo simulations with $10$ closed-loop steps, starting from the initial conditions $x_1(0) = [-6, 0]^\top$, $x_2(0) = [6, 0]^\top$ and $x_3(0) = [4, 0]^\top$. 
Figure \ref{fig:closed_loop} depicts the corresponding first $500$ closed-loop state trajectories, whereas in Figure \ref{fig:constraint_satisfaction} the point-wise in-time empirical constraint satisfaction is shown (for $K = 2000$). It can be seen that at every closed-loop instant $k \in \{ 1, ..., 10 \}$ the chance constraints are satisfied with the required level of $p_{i,x} = 0.6, \forall i \in \mathcal{M}$. 

\subsection*{Performance comparison}
Next, we compare the performance of a distributedly synthesized and centrally synthesized MPC setup. Hence, the system dynamics and MPC parameters remain unchanged, except for the terminal controller, injection gains and PRS synthesis. For the distributed setting we computed $K_d = \text{col}_{i \in \mathcal{M}} (K_{\mathcal{N}_i})$ and $L_d = \text{col}_{i \in \mathcal{M}}(L_{i})$ along Remark \ref{remark_stabilizable} via structured LMIs, whereas for the central setup we simply obtain the matrices $K_c$ and $L_c$ from the solution of the linear quadratic control and estimation problem.
\begin{center}
\begin{tabular}{lllll}
\cline{1-4}
\multicolumn{1}{|l|}{}      & \multicolumn{1}{l|}{av{[}$J^*${]}} & \multicolumn{1}{l|}{$\#C_{\text{vio}}$} & \multicolumn{1}{l|}{$\hat{p}(k)$} &  \\ \cline{1-4}
\multicolumn{1}{|l|}{Central}     & \multicolumn{1}{l|}{$17.6263$}             & \multicolumn{1}{l|}{$3039$}                                      & \multicolumn{1}{l|}{$0.799$}                                                        &  \\ \cline{1-4}
\multicolumn{1}{|l|}{Distributed} & \multicolumn{1}{l|}{$17.9883$}             & \multicolumn{1}{l|}{$2895$}                                      & \multicolumn{1}{l|}{$0.851$}                                                        &  \\ \cline{1-4}
                                  &                                    &                                                             &                                                                               & 
\end{tabular}
\vspace{-1em}
\captionof{table}{Comparison between central and distributed setups}
\label{table:results}
\end{center}
In Table \ref{table:results} we compare the average closed-loop cost $J^*$, the total number of constraint violations $\#C_{\text{vio}}$ and the smallest in-time empirical constraint satisfaction $\hat{p}(k)$. 

In both cases the probabilistic constraints were satisfied with the specified level, i.e. $\hat{p}(k) > p_{i,x} = 0.6$. It can be seen that the central setup produces a slightly lower average cost, which is the result of a less conservative chance constraint tightening due to the full knowledge of the state estimation vector $y$ and the fact that the injection matrix $L_c$ is dense. 

Furthermore, the central PRS are based on the exact stationary covariance matrix, while the distributed PRS use an over approximation, as stated in Remark \ref{rem:over_approx}. This yields a better exploitation of the chance constraints in the central setup, which can be seen by the higher total constraint violations $\#C_{\text{vio}}$ and lower $\hat{p}(k)$ in the central approach. In this example, the central PRS is about $36.1 \%$ smaller relative to the distributed PRS, which has a direct influence to the size of the feasible region of the MPC problem. 

As already stated in \cite{b10}, the strong closed-loop guarantees of the PRS-based SMPC approaches come at the price of a more conservative constraint tightening (empirical constraint satisfaction much larger than the required $p_x = 0.6$), which is furthermore amplified in the distributed setting.
\begin{figure}[h]
\centering
	  \includegraphics[width=0.9\linewidth]{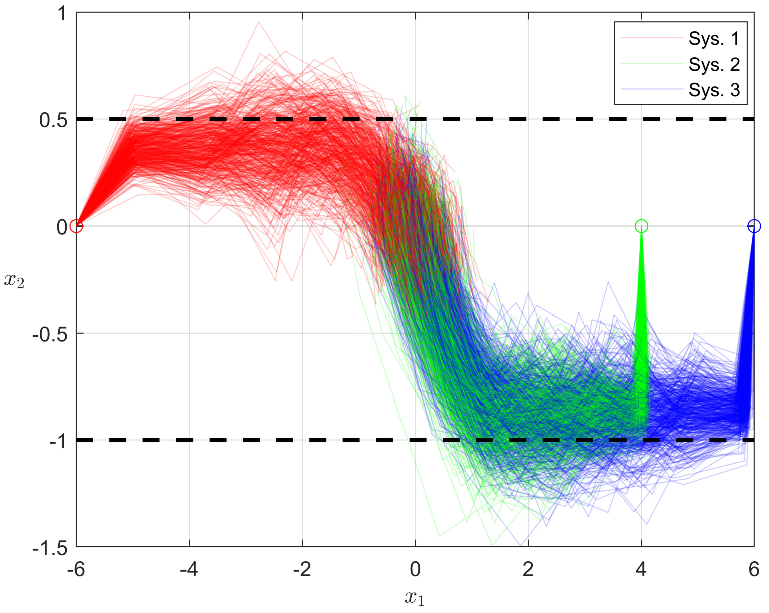}
	  \caption{$K = 500$ closed-loop trajectories for $10$ closed-loop time steps}
	  \label{fig:closed_loop}
\end{figure}
\begin{figure*}[t]
\centering
\begin{minipage}{.28\textwidth}
  \centering
  \includegraphics[width=0.9\linewidth]{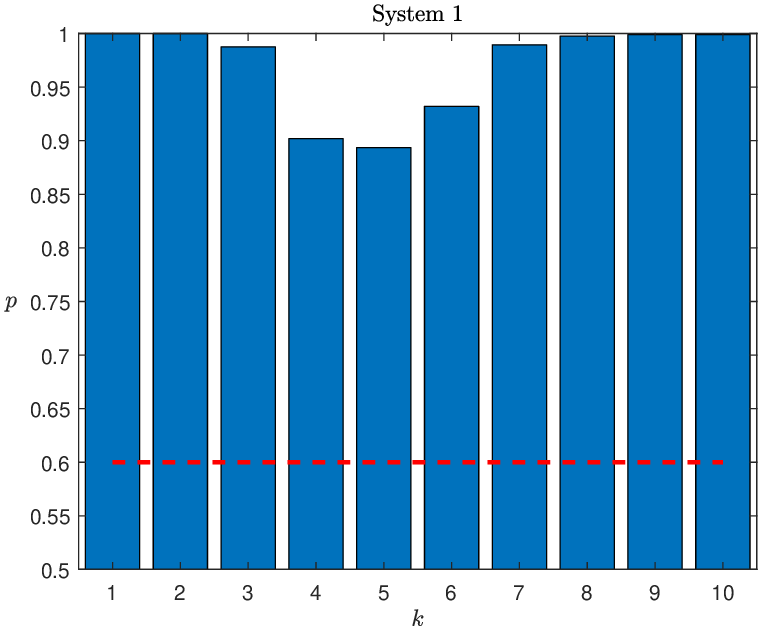}
\end{minipage}%
\begin{minipage}{.28\textwidth}
  \centering
  \includegraphics[width=0.9\linewidth]{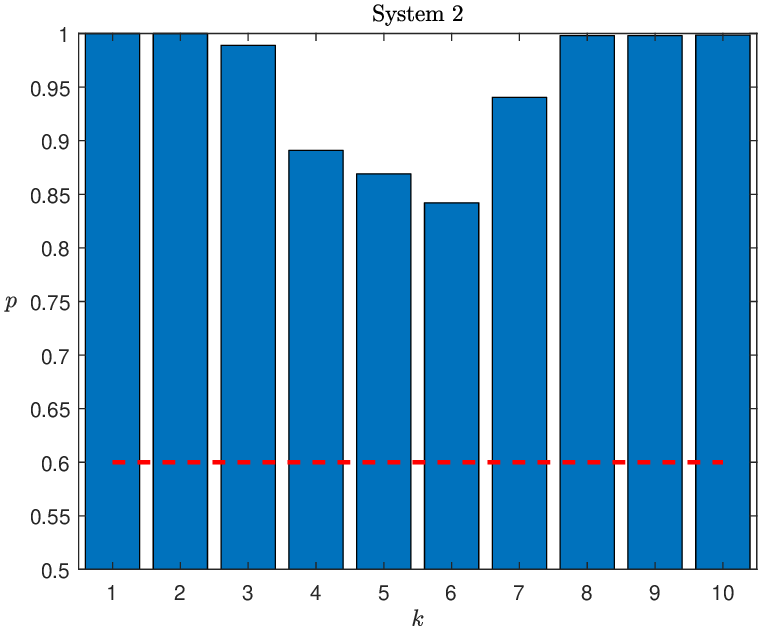}
\end{minipage}
\begin{minipage}{.28\textwidth}
  \centering
  \includegraphics[width=0.9\linewidth]{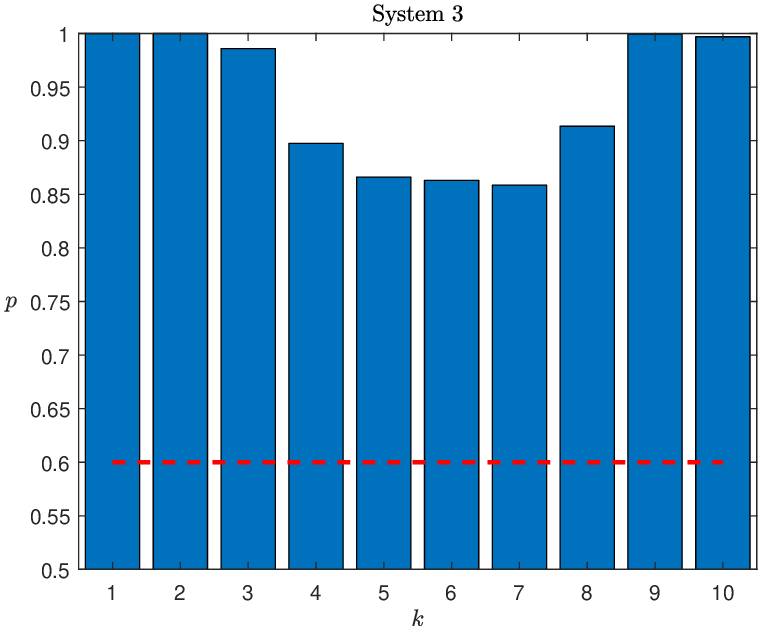}
\end{minipage}
\caption{Probability $(p)$ of constraint satisfaction for $k=1, ..., 10$. The red dotted line indicates $p_{i,x} = 0.6, i = 1, ..., 3$}
\label{fig:constraint_satisfaction}
\end{figure*}
\section{Conclusion}
We presented a stochastic output-feedback MPC scheme for distributed systems using Probabilistic Reachable Sets. The approach is highlighted through its fully distributed synthesis of the controller ingredients, the distributed PRS computation and the reduction to a quadratic program, which renders the optimization problem applicable for distributed ADMM. 
The optimization problem is proven to be recursive feasible, convergent to an average cost bound, while the chance constraint satisfaction is guaranteed for the closed-loop system. The numerical example reveals that the distributed PRS computation comes at the price of a more conservative chance constraint tightening, which results in a higher empirical chance constraint satisfaction rate than necessary.
\subsection*{Outlook}
Future work may include the investigation of the growing tube inspired approach for chance constraint tightening and as well as the inclusion of the inexact minimization framework, which makes the approach applicable to a wider range of practical problems. Another research direct may include the incorporation of coupling chance constraints.

\newpage
\appendix
\subsection{Proof of Theorem \ref{thm:main_result}}
The proof consists of four parts. First we show recursive feasibility and predictive chance constraint satisfaction. Then we show closed-loop chance constraint satisfaction, followed by the convergence proof. The last part in concerned about the asymptotic average cost bound. From the assumption on exact feasibility we can use the global vectors during the proof.

\textbf{Part 1: Recursive feasibility}
Consider that at time $k$ a feasible solution to Problem \ref{mpc_problem} exists. Then, at time $k+1$, we have to consider the possibly suboptimal solution due to Mode $2$. Lets define the shifted solutions 
\begin{align*}
&\scalebox{0.9}{$\tilde{z}(t|k+1) = [ z^*(1|k), ..., z^*(N|k), z(N+1|k)]$} \\
&\scalebox{0.9}{$\tilde{v}(t|k+1) = [ v^*(1|k), ..., v^*(N-1|k), A_K z^*(N|k)],$}
\end{align*}
where $z(N+1|k) = A_K z^*(N|k)$. In view of feasibility at time $k+1$ follows that $(\tilde{z}(t|k+1), \tilde{v}(t|k+1) ) \in (\mathbb{Z} \times \mathbb{V})$ for $t=0, ..., N-2$. For $t = N-1$ we have that $\tilde{z}(N-1|k+1) \in \mathbb{Z}_f$. 
Thus, by Assumption \ref{assum:distributed_ingredients}, and in particular from the invariance property \eqref{eq:terminal_invariance}, recursive feasibility follows. Predictive chance-constraint satisfaction is then a direct consequence, since for all $z \in \mathbb{Z}_f$ the terminal constraints \eqref{eq:terminal_constraints} are satisfied.

\textbf{Part 2: Closed-loop chance constraint satisfaction}. For brevity we show the closed-loop guarantees only for the state constraints. Consider the augmented error $\delta x = \tilde{x} + e$ with $\delta x(0|0) = \delta x(0) = 0$ and assume that $\mathcal{R}^X$ is a convex symmetric PRS. Now, at time $k + 1$, we condition the probability on feasibility of Problem \ref{mpc_problem} in Mode 1 or 2
\begin{align}
	&\text{Pr}(\delta x(k+1) \in \mathcal{R}^X) \nonumber \\ 
	= &\text{Pr}(\delta x(k+1) \in \mathcal{R}^X | M^1) \text{Pr}(M^1) \nonumber  \\
	+ &\text{Pr}(\delta x(k+1) \in \mathcal{R}^X | M^2)  \text{Pr}(M^2) \label{eq:conditional_prob}.
\end{align}
For Mode $2$ we have $z(0|k+1) = z(1|k)$, i.e.
\begin{align}
	\scalebox{0.95}{$\text{Pr}(\delta x(k+1) \in \mathcal{R}^X | M^2) =  \text{Pr}(e(1|k) + \tilde{x}(1|k) \in \mathcal{R}^X)$}. \label{eq:mode1_error}
\end{align}
For Mode $1$ we have $z(0|k+1) = \hat{x}(k+1)$, hence $e(k+1) = 0$ and the error evaluates according to
\begin{align*}
	&\text{Pr}(\delta x(k+1) \in \mathcal{R}^X | M^1) = \text{Pr}( \tilde{x}(k+1) \in \mathcal{R}^X) \\
	\geq &\text{Pr}( \tilde{x}(1|k) \in \mathcal{R}^X) \geq \text{Pr}( e(1|k) +  \tilde{x}(1|k) \in {\mathcal{R}}^X),
\end{align*}
where the first inequality follows from central convex unimodality of $\mathcal{Q}(0, \Sigma_{f}^{\tilde{x}})$ (Remark \ref{rem:ccu_property}) and \cite[Thm. 3]{b10}. The second inequality is due to \cite[Thm. 1]{anderson}. Substituting the latter inequality and \eqref{eq:mode1_error} into \eqref{eq:conditional_prob}, yields
\begin{align*}
	&\text{Pr}(\delta x(k+1) \in \mathcal{R}^X) \nonumber \\ 
	\geq &\text{Pr}(e(1|k) + \tilde{x}(k + 1) \in \mathcal{R}^X) \text{Pr}(M^1)  \nonumber \\
	+ & \text{Pr}( e(1|k) + \tilde{x}(k+1) \in \mathcal{R}^X) \text{Pr}(M^2)  \nonumber\\
	= & \text{Pr}( \delta x(1|k) \in \mathcal{R}^X),
\end{align*}
which, similar to \cite{b10}, bounds the closed-loop error. For further details, we refer the interested reader to the proof of \cite[Thm. 3]{b10}. Closed-loop chance constraint satisfaction is then a direct consequence of predictive chance constraint satisfaction.\\ 
\textbf{Part 3: Optimal cost decrease} The idea of the convergence proof is partially taken from \cite{b10}. Let $J(z(0|k), v(\cdot|k)) = \sum_{t=0}^{N-1} \Vert z(t|k) \Vert_Q^2 + \Vert v(t|k) \Vert_R^2 + \Vert z(N|k) \Vert_P^2$ be the optimal cost of Problem \ref{mpc_problem}. We condition the expected cost at time $k+1$ on feasibility of Problem \ref{mpc_problem} in Mode $1$ or Mode $2$
\begin{align}
	&\mathbb{E}(J^*(z(k+1)) \nonumber\\ 
	 = &\mathbb{E}(J^*(z(k+1))|M^2) \text{Pr}(M^2) \nonumber \\
	+ &\mathbb{E}(J^*(z(k+1))|M^1) \text{Pr}(M^1). \label{eq:cost_decrease_expected}
\end{align} 
The first term directly satisfies
\begin{align}
\mathbb{E}(J^*(z(k+1))|M^2) \leq J(z(1|k), \tilde{v}(\cdot|k+1)), \label{eq:mode1_decrease}
\end{align}
where $\tilde{v}(\cdot|k+1)$ is the shifted control sequence. According to \cite{bemporad} the optimal cost $J^*(z)$ of a nominal MPC problem is piecewise quadratic in $z$, then by Assumption \ref{assum:bounded_set} follows that there exists a Lipschitz constant $\beta$, such that  
\begin{align}
	J^*(z + \delta x) \leq J^*(z) + \beta \Vert \delta x \Vert_2 .\label{eq:bound_cost}
\end{align}
The expected value for Mode $1$ is evaluated according to
\begin{align*}
	&\mathbb{E}( J^*(\hat{x}(k+1))\vert M^1) = \mathbb{E}(J^*(z(0|k+1))|M^1) \\
	&\overset{\eqref{eq:bound_cost}}{\leq} J(z(1|k), \tilde{v}(\cdot|k+1)) + \beta \mathbb{E} \big(\Vert x(k+1) - z(1|k) \Vert_2 \vert M^1 \big ).
\end{align*}
Now we can add $\beta\mathbb{E}(\Vert x(k+1) - z(1|k) \Vert_2 \vert M^2 )$ to \eqref{eq:mode1_decrease} and substitute both inequalities into \eqref{eq:cost_decrease_expected}, which yields
\begin{align*}
		\mathbb{E}(J^*(z(0|k+1)) &\leq  J(z(1|k), \tilde{v}(\cdot|k+1)) \\
		 &+ \beta \mathbb{E}\big(\Vert x(k+1) - z(1|k) \Vert_2\big ).
\end{align*} 
The latter term can be further evaluated by considering the decomposition $x(k+1) - z(1|k) = \tilde{x}(1|k) + e(1|k) = [I \quad I] \: \xi(1|k)$, i.e.
\begin{align}
&\beta \mathbb{E}\big(\Vert x(k+1) - z(1|k) \Vert_2\big ) = \beta\mathbb{E}\big(\Vert [I \quad I] \: \xi(1|k) \Vert_2  \big ) \nonumber \\
&\leq \sqrt{2} \beta\mathbb{E}\big(\Vert \: \xi(1|k) \Vert_2  \big ) \nonumber \\
&\overset{\eqref{eq:extended_error}}{=} \sqrt{2} \beta\mathbb{E} \bigg (  \Vert \Psi \xi(0|k) \Vert_2+ \Vert \Gamma \omega(0|k) \Vert_{2} \bigg ) \nonumber \\
&\leq \underbrace{\frac{\sqrt{2} \beta}{\sqrt{\lambda_{\text{min}}(P_\Sigma)}}}_{\gamma} \bigg (  \Vert \Psi \xi(0|k) \Vert_{P_\Sigma}  + \mathbb{E}\big( \Vert \Gamma \omega(0|k) \Vert_{P_\Sigma} \big ) \bigg ) \label{eq:proof_cost_bound}
\end{align}
where the first inequality is due to the triangle inequality together with the global version of \eqref{eq:extended_error}. The second inequality uses $\sqrt{\lambda_{min}(P_{\Sigma})} \Vert \xi \Vert_2 \leq \Vert \xi \Vert_{P_{{\Sigma}}}$, where $P_\Sigma > 0$ denotes the solution of the Lyapunov inequality
\begin{align*}
	 \Vert \Psi \xi(0|k) \Vert_{P_{\Sigma}}  \leq  \Vert \xi(0|k) \Vert_{P_{\Sigma}} - \epsilon \Vert \xi(0|k) \Vert_{P_{\Sigma}}
\end{align*}
for some $\epsilon>0$. Thus, \eqref{eq:proof_cost_bound} can be bounded by
\begin{multline*}
\beta \mathbb{E}\big(\Vert x(k+1) - z(1|k) \Vert_2\big )  \\
\leq \gamma \bigg (  (1-\epsilon) \Vert \xi(0|k) \Vert_{P_{\Sigma}} + \mathbb{E}\big( \Vert \Gamma \omega(0|k) \Vert_{P_\Sigma} \big ) \bigg ). 
\end{multline*}
If we combine the latter inequality with the nominal MPC cost decrease due to the terminal controller \eqref{eq:terminal_cost_dec}, we obtain
\begin{align*}
&\mathbb{E}\big(J^{*}\big(z(k+1)\big) \big) - J^*\big(z(k) \big) \\
	&\leq - \Vert z(k) \Vert^2_Q - \Vert v(k) \Vert^2_R - \gamma \epsilon \Vert \xi(k) \Vert_{P_{\Sigma}} + \gamma \mathbb{E}(\Vert \Gamma \omega(k) \Vert_{P_{\Sigma}}
\end{align*}
where $z(k) = z(0|k)$ and $\Omega = \text{blkdiag}(\Sigma^W, \Sigma^D)$. \\
\textbf{Part 4: Asymptotic average cost bound}\\
 Using standard arguments from stochastic control, we obtain
\begin{align*}
	0 \leq&\lim_{r \rightarrow \infty} \frac{1}{r} \mathbb{E}\big(J^{*}\big(z(k)\big) \big) - J^*\big(z(0) \big) \\
	\leq &\lim_{r \rightarrow \infty}  \mathbb{E}\bigg(\sum_{l = 0}^{r}  - \Vert z(l) \Vert^2_Q - \Vert v(l) \Vert^2_R - \gamma \epsilon \Vert \xi(l) \Vert_{P_{\Sigma}} \\
	& \hspace{3.7em}+ \gamma \mathbb{E}(\Vert \Gamma \omega(l) \Vert_{P_{\Sigma}} \bigg ) \\
	\leq &\lim_{k \rightarrow \infty}  \sum_{l = 0}^{k} \gamma \mathbb{E}(\Vert \Gamma \omega(l) \Vert_{P_{\Sigma}} = \gamma \sqrt{\text{tr}(\Gamma^\top P_\Sigma \Gamma \Omega) } = c,
\end{align*}
 which concludes the proof. \qed
\vspace{12pt}
\end{document}